\newtheorem{theorem}{Theorem}[section]
\newtheorem{lemma}[theorem]{Lemma}
\theoremstyle{definition}
\newcommand{\M}{{\rm M}}
\newcommand{\GL}{{\rm GL}}
\newcommand{\SL}{{\rm SL}}
\newcommand{\cor}{{\rm Core}}
\begin{document}
	
	\title[On locally solvable subgroups]{On locally solvable subgroups in division rings}
	
	\author[Hu\`{y}nh Vi\d{\^{e}}t Kh\'{a}nh]{Hu\`{y}nh Vi\d{\^{e}}t Kh\'{a}nh}
	\address{Faculty of Mathematics and Computer Science, VNUHCM - University of Science, 227 Nguyen Van Cu Str., Dist. 5, Ho Chi Minh City, Vietnam.} 
	\email{huynhvietkhanh@gmail.com} 
	
	\keywords{division ring; locally solvable subgroup.\\
		\protect \indent 2010 {\it Mathematics Subject Classification.} 16K20, 20F19.}
	\maketitle
\begin{abstract} 
	Let $D$ be a division ring with center $F$, and $G$ a subnormal subgroup of $D^*$. We show that if $G$ is a locally solvable group such that a derived subgroup $G^{(i)}$ is algebraic over $F$, then $G$ must be central. Also, if $M$ is non-abelian locally solvable maximal subgroup of $G$ with $M^{(i)}$ algebraic over $F$, then $D$ is a cyclic algebra of prime degree over $F$.
\end{abstract}

\section{Introduction} 

A well-known result of L. K. Hua says that if the multiplicative subgroup $D^*$ of a division ring $D$ is solvable, then $D$ is a field. The subnormal subgroups of $D^*$ have been studied for a long time by many authors. Subnormal subgroups of some special types were considered, such as nilpotent, solvable, and locally nilpotent subgroups. In this direction, Stuth \cite{stuth} asserted that every solvable subnormal subgroup of $D^*$ is central, i.e, it is contained in the center $F$ of $D$. In \cite{huz}, Huzurbazar showed that this result remains true if the word ``solvable" is replaced by ``locally nilpotent''. It is more difficult to handle the case ``locally solvable", and it is unknown whether or not every locally solvable subnormal subgroup in a division ring is central. Relating to this problem, the authors in \cite{hai-thin} showed that
the question has the positive answer in the case when $D$ is algebraic over $F$. To the best of our knowledge there has been
no better results until now. In this note, we show that every locally solvable subnormal subgroup whose $i$-th derived subgroup (for some $i\geq 1$) is algebraic over $F$ is central, which is a slight generalization of those results.

In another direction, maximal subgroups of a subnormal subgroup in division rings were also considered (see e.g. \cite{hai-thin}, \cite{hai-tu}). A remarkable result in \cite{hai-ha} asserted that if $M$ is a non-abelian locally solvable maximal subgroup of $D^*$ such that $M'$ is algebraic over $F$, then $[D:F]<\infty$. This result was generalized by the authors in \cite{hai-tu}, where it is shown that $D$ is even a cyclic algebra of prime degree over $F$ (Theorem 4.2). We show that the result is also true if $D^*$ is replaced by an arbitrary subnormal subgroups and we only need the condition that $M^{(i)}$  is algebraic over $F$ instead of $M'$.

Throughout this note, for a ring $R$ with the identity $1\ne0$, the symbol $R^*$ stands for the group of units of $R$. If $D$ is a division ring with the center $F$ and $S\subseteq D$, 
%we denote $F[S]$, $F(S)$ the subring, the division subring  of $D$ respectively generated by $F\cup S$. 
then $F[S]$ and $F(S)$ denotes respectively the subring and the division subring of $D$ generated by  $F\cup S$.
%The symbol $G^{(i)}$, for some positive integer $i$, denotes the $(i)$-th derived subgroup of $G$. 
For a group $G$ and a positive integer $i$, the symbol $G^{(i)}$ is the $i$-th derived subgroup of $G$. If $H$ and $K$ are two subgroups in a group $G$, then $N_K(H)$ denotes the set of all elements $k\in K$ such that $k^{-1}Hk\leq H$, i.e., $N_K(H)=K\cap N_G(H)$. If $A$ is a ring or a group, then $Z(A)$ denotes the center of $A$.

\section{Results}

\begin{lemma}[{\cite[3.2]{wehrfritz_91}}]\label{lemma_2.1}
	Let $R$ be a ring, $J$ a subring of $R$, and $H\leq K$ subgroups of the group of units of $R$ normalizing $J$ such that $R$ is the ring of right quotients of $J[H]\leq R$ and $J[K]$ is a crossed product of $J[B]$ by $K/B$ for some normal subgroup $B$ of $K$. Then $K=HB$.
\end{lemma}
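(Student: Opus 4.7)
The plan is to pick an arbitrary $k\in K$ and prove $k\in HB$; since $HB\subseteq K$ is automatic, this gives $K=HB$. The whole strategy is to exploit the $(K/B)$-graded (crossed product) structure of $J[K]$ together with the representation of elements of $R$ as right quotients over $J[H]$. First, since $k\in K\subseteq R$ and $R$ is the right ring of quotients of $J[H]$, I would write $k=ab^{-1}$ for some $a,b\in J[H]$ with $b$ regular, so that the identity $kb=a$ already holds inside $J[K]\subseteq R$.

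Next I would unpack the crossed product
\[
J[K] \;=\; \bigoplus_{xB\,\in\, K/B} J[B]\cdot \bar{x},
\]
choosing coset representatives $\bar{x}\in K$. Because $K$ normalizes $J$ and $B\trianglelefteq K$, it also normalizes $J[B]$; this is what makes left multiplication by $k\in K$ a graded operation, namely $k\cdot J[B]\bar{x}=J[B]\cdot\overline{kx}$. In other words, multiplying on the left by $k$ shifts the $(xB)$-homogeneous component to the $(kxB)$-homogeneous component. Moreover, coset representatives for $HB/B\cong H/(H\cap B)$ can be taken inside $H$, so the subring $J[HB]$ coincides with the $HB/B$-homogeneous subring $\bigoplus_{xB\in HB/B}J[B]\bar{x}$ of $J[K]$.

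Applying this to $kb=a$: since $a,b\in J[H]\subseteq J[HB]$, both $a$ and $b$ are supported (in the above grading) on the subgroup $HB/B$ of $K/B$, and by the support shift, $kb$ is supported on the coset $(kB)\cdot(HB/B)=(kHB)/B$. Two cosets of a subgroup are either equal or disjoint. If they were disjoint, then uniqueness of the crossed-product decomposition would force $a=kb=0$; but $b$ is regular in $R$ and $k$ is a unit of $R$, so $kb\ne 0$ in $R$ and hence in the subring $J[K]$. Therefore the two cosets coincide, which gives $kHB=HB$ and hence $k\in HB$, as desired.

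The step I expect to require the most care is verifying the support shift $k\cdot J[B]\bar{x}=J[B]\overline{kx}$, which relies on $K$ genuinely normalizing $J[B]$; this has to be teased out of the hypotheses that $K$ normalizes $J$ and that $B\trianglelefteq K$, together with the observation that $k\bar{x}$ and $\overline{kx}$ differ by an element of $B$. Once this is settled, the rest is a routine comparison of homogeneous components in a direct-sum decomposition combined with the regularity of $b$.
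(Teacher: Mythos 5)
The paper offers no proof of this lemma: it is quoted verbatim from Wehrfritz (\cite[3.2]{wehrfritz_91}) and used as a black box, so there is no internal argument to compare yours against. Judged on its own, your proof is correct and complete. The two pillars both hold up: (1) $K$ normalizes $J[B]$ because it normalizes $J$ and $B\trianglelefteq K$, and $J[B]\,k\bar{x}=J[B]\,\overline{kx}$ because $k\bar{x}$ and $\overline{kx}$ lie in the same $B$-coset and every $b\in B$ is a unit of $J[B]$ with inverse in $J[B]$; together these give the support shift $k\cdot J[B]\bar{x}=J[B]\overline{kx}$. (2) Writing $k=ab^{-1}$ with $a,b\in J[H]$ and $b$ regular, the identity $a=kb$ forces the nonempty support of $a$ to lie in both $HB/B$ and $kHB/B$, and disjointness of distinct left cosets then yields $k\in HB$. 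One small point worth making explicit if you write this up: the reduction $J[H]=\sum_{h\in H}Jh$ (pushing all factors from $J$ to the left using that $H$ normalizes $J$) is what guarantees $a$ and $b$ are supported on $HB/B$, and the conclusion $a\neq 0$ follows most directly from $k=ab^{-1}$ being a unit rather than from regularity of $b$ alone. Whether this reproduces Wehrfritz's original argument I cannot say from the paper, but as a self-contained proof it is sound.
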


\begin{lemma}[{\cite[Corollary 24]{wehrfritz_89}}]\label{lemma_2.2}
	Let $A$ be a one-sided Artinian ring. Suppose that $S$ is a right Goldie subring of $A$ and $G$ a locally solvable subgroup of the group of units of $A$ normalizing $S$. Set $R=S[G]\leq A$ and assume that $R$ is prime. Then $R$ is right Goldie.
\end{lemma}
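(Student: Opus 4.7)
The strategy is to verify the two Goldie conditions for $R$: ACC on right annihilators and finite right uniform dimension. Since $R \subseteq A$, the first condition should come essentially for free from the ambient Artinian structure. For any subset $X \subseteq R$ one has $\operatorname{ann}_R(X) = R \cap \operatorname{ann}_A(X)$, so a strictly ascending chain of right annihilators in $R$ would lift to a strictly ascending chain in $A$, contradicting the chain conditions on annihilators that are automatic in a one-sided Artinian ring.

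The real difficulty will lie in bounding the right uniform dimension. The naive transfer $I \mapsto IA$ fails because $R$-module independence need not survive scalar extension to $A$. Instead, I would exploit local solvability to reduce to a more manageable situation: writing $G$ as the directed union of its finitely generated (hence solvable) subgroups $H$, one has $R = \bigcup_H S[H]$. Any finite independent family of right ideals of $R$ involves only finitely many elements and hence is supported in some $S[H]$, so it suffices to bound the right uniform dimensions of the $S[H]$ uniformly in $H$.

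For each solvable $H$, I would induct on the derived length. The derived series $H \triangleright H' \triangleright H'' \triangleright \cdots \triangleright 1$ yields a tower of subrings $S \subseteq S[H^{(d-1)}] \subseteq \cdots \subseteq S[H]$, where each inclusion corresponds to an abelian quotient. Lemma \ref{lemma_2.1} provides the key structural handle: it identifies precisely when $S[H^{(i)}]$ can be realized as a crossed product of $S[H^{(i+1)}]$ by $H^{(i)}/H^{(i+1)}$. Combined with classical results that a crossed product of a right Goldie ring by an abelian group (under an appropriate Ore condition) remains right Goldie, this should give the inductive step, with the uniform dimension of each $S[H]$ controlled by that of the ambient Artinian ring $A$.

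The hard part will be justifying the inductive crossed-product step in sufficient generality: one must ensure that the prime and Goldie properties actually propagate up the derived series, and that the required Ore or torsion-free conditions remain satisfied at each layer. This is where primeness of $R$ should play its essential role, ruling out the zero-divisor degeneracies that would otherwise obstruct the abelian crossed-product Goldie theorem and allowing one to pass to the necessary ring of quotients at each inductive stage.
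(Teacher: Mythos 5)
The paper does not actually prove this lemma: it is quoted verbatim from Wehrfritz [Corollary 24], so there is no internal argument to compare yours against. Judged on its own terms, your outline gets the easy half right and leaves the hard half unproved. The ACC on right annihilators does pass from $A$ to the subring $R$, though not by the naive lift you describe: if $\operatorname{ann}_R(X_1)\subseteq\operatorname{ann}_R(X_2)$, the annihilators $\operatorname{ann}_A(X_i)$ need not form a chain; one must first replace each $X_i$ by the left annihilator in $R$ of $\operatorname{ann}_R(X_i)$, obtaining a descending chain of subsets whose right annihilators in $A$ do form an ascending chain. The reduction of the uniform-dimension bound to the finitely generated (hence solvable) subgroups $H$ of $G$ is also sound: a direct sum of $n$ nonzero principal right ideals of $R$ restricts to a direct sum of $n$ nonzero right ideals of $S[H]$ for a suitable $H$, so it suffices to bound the right uniform dimensions of the rings $S[H]$ uniformly in $H$.

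That uniform bound is the entire content of the theorem, and your sketch does not deliver it. Three concrete obstructions. First, $S[H]$ is not in general a crossed product of $S[H']$ by $H/H'$ --- that requires the coset representatives to be free over $S[H']$ --- and Lemma \ref{lemma_2.1} cannot manufacture such a structure for you: it runs in the opposite direction, deducing $K=HB$ from an already given crossed-product decomposition. Second, primeness of $R$ does not descend to $S[H]$ or to the intermediate rings $S[H^{(j)}]$ of the derived tower, so the ``crossed product of a Goldie ring by an abelian group is Goldie'' step (itself false without torsion-freeness or primeness hypotheses: a group algebra of an infinite elementary abelian group can have infinite uniform dimension) has nothing to stand on. Third, even if every $S[H]$ were shown to be right Goldie, their uniform dimensions could a priori grow without bound as $H$ grows, and a directed union of Goldie rings need not be Goldie; your hope that the dimension of $S[H]$ is ``controlled by that of the ambient Artinian ring $A$'' fails for general subrings of Artinian rings --- a free algebra on two generators embeds in a division ring and has infinite right uniform dimension --- so solvability must enter at exactly the point you defer. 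These are the places where Wehrfritz's actual argument does real work, and the proposal as written does not close any of them.
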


\begin{lemma}\label{lemma_2.4}
Let $D$ be a division ring with center $F$, and $G$
	% is
a subnormal subgroup of $D^*$. If $G$ is abelian-by-locally finite, then $G\subseteq F$. 
\end{lemma}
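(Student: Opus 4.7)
My plan is to reduce the lemma to two classical results on subnormal subgroups of $D^{*}$: Stuth's theorem that every solvable subnormal subgroup of $D^{*}$ is central, and a theorem of Herstein (extending a theorem of Kaplansky on the full multiplicative group) that every subnormal subgroup of $D^{*}$ which is radical over $F^{*}$---meaning every element has some positive power in $F$---must also be central.

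By hypothesis there is an abelian normal subgroup $A\trianglelefteq G$ with $G/A$ locally finite. As a first step, I would note that normality within subnormality is transitive, so $A$ is itself subnormal in $D^{*}$. Being abelian (hence solvable), Stuth's theorem forces $A\subseteq F$.

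Next, I would show that $G$ is radical over $F^{*}$. For any $g\in G$, the cyclic subgroup $\langle gA\rangle$ of $G/A$ is a finitely generated subgroup of a locally finite group, hence finite; so $g^{n}\in A\subseteq F$ for some $n=n(g)\ge 1$. This is exactly the condition that every element of $G$ has a positive power in $F$, so applying the Herstein--Kaplansky radical theorem to the subnormal subgroup $G\le D^{*}$ yields $G\subseteq F$, as desired.

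The main potential obstacle is to have the right classical inputs in force, especially the radical-over-centre theorem in the generality of subnormal subgroups; once Stuth's theorem and that radical-subnormal result are granted, the argument is essentially formal and relies only on transitivity of subnormality and a single use of local finiteness.
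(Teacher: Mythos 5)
Your first step coincides with the paper's: $A$ is normal in $G$ and $G$ is subnormal in $D^*$, so $A$ is an abelian (hence solvable) subnormal subgroup of $D^*$ and Stuth's theorem puts $A$ inside $F$. The gap is in your final step. The result you invoke --- that a subnormal subgroup of $D^*$ which is radical over the center $F$ must be central --- is not an available theorem: it is Herstein's conjecture (stated in the very 1978 Israel J.~Math.\ paper of Herstein cited here), which is still open in general and has been proved only under additional hypotheses such as an uncountable center. What Herstein established unconditionally, and what this paper actually uses, is the weaker statement that a \emph{torsion} subnormal subgroup of $D^*$ is central. Your $G$ is radical over $F$ (each $g$ has $g^{n}\in F$) but need not be torsion, so the torsion theorem does not apply to $G$ itself, and the radical version cannot be quoted. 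You correctly identified this input as the main risk; unfortunately it is exactly the point that fails.

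The repair is close at hand and is the route the paper takes: from $A\subseteq G\cap F^*\subseteq Z(G)$ one gets that $G/Z(G)$ is locally finite, so by the locally finite version of Schur's theorem the derived subgroup $G'$ is locally finite, in particular torsion. Since $G'\trianglelefteq G$ and $G$ is subnormal in $D^*$, the subgroup $G'$ is a torsion subnormal subgroup of $D^*$, and Herstein's (proven) theorem gives $G'\subseteq F$. Hence $G$ is metabelian, so solvable, and Stuth's theorem --- which you already used for $A$ --- yields $G\subseteq F$. In short, the missing idea is to transfer the torsion condition from $G$ modulo its center down to $G'$ via Schur, where a genuine theorem (rather than an open conjecture) can be applied.
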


\begin{proof} Let $A$ be an abelian normal subgroup of $G$ such that $G/A$ locally finite. Then $A$ is an abelian subnormal subgroup of $D^*$, from which it follows by \cite[Theorem 2]{stuth} that $A \subseteq F$. Therefore $A\subseteq G\cap F^*\subseteq Z(G)$, from which it follows that $G/Z(G)$ is locally finite. Consequently, we conclude that $G'$ is locally finite. Now, $G'$ is a torsion subnormal subgroup of $D^*$, and thus $G'\subseteq F$ by \cite[Therem 8]{her}. This implies that $G$ is solvable, and hence it is contained in $F$ by \cite[Theorem 2]{stuth}.
\end{proof}

\begin{lemma}\label{lemma_2.5}
	Let $D$ be a division ring with center $F$, and $G$ a subgroup of $D^*$. If $G/G\cap F^*$ is a locally finite group, then $F(G)$ is locally finite dimensional (as a vector space) over $F$.
\end{lemma}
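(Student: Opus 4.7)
\emph{Plan.} The plan is to show that every finitely generated $F$-subalgebra of $F(G)$ is finite-dimensional over $F$; this is what ``locally finite dimensional'' must mean here, since every vector space is trivially the union of its finite-dimensional subspaces. First I would reduce the problem from $F(G)$ to subgroups of $G$: any element of $F(G)$ is built from $F\cup G$ by finitely many ring operations and inversions, so any finitely generated $F$-subalgebra of $F(G)$ is contained in $F(H)$ for some finitely generated subgroup $H\leq G$. It thus suffices to show that $F(H)$ is finite-dimensional over $F$ whenever $H$ is a finitely generated subgroup of $G$.

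Next, for such an $H$, I would use the hypothesis to cut $H$ down modulo $F^*$. The natural map $H/(H\cap F^*)\hookrightarrow G/(G\cap F^*)$ is injective, so $H/(H\cap F^*)$ is a finitely generated subgroup of a locally finite group, and is therefore finite. Choose a transversal $h_1,\dots,h_m$ for $H\cap F^*$ in $H$. Since $F^*=Z(D^*)$, the subgroup $H\cap F^*$ is central in $H$, so every element of $H$ can be written as $\lambda h_i$ with $\lambda\in F^*\cap H$ and $1\leq i\leq m$. Consequently the $F$-linear span of $H$, which coincides with the $F$-subalgebra $F[H]$ because $H$ is closed under multiplication, satisfies
\[
F[H]=\sum_{i=1}^{m}F\,h_i,
\]
so $\dim_F F[H]\leq m$.

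To conclude, I would invoke the standard fact that a finite-dimensional $F$-subalgebra of a division ring, being a finite-dimensional $F$-domain, is itself a division ring. Hence $F[H]=F(H)$, and $\dim_F F(H)\leq m<\infty$. Combined with the first step, this gives that every finitely generated $F$-subalgebra of $F(G)$ is finite-dimensional, as required.

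The argument is short and the only minor obstacle is the bookkeeping in the reduction step, namely checking that a finitely generated $F$-subalgebra of $F(G)$, although its generators may involve inverses of elements of $G$, still only uses finitely many elements of $G$ and therefore sits inside $F(H)$ for some finitely generated $H$. Everything after this reduction is just coset counting plus the elementary finite-dimensional-domain-is-a-division-ring observation; no structural results about division rings or about locally solvable groups are needed.
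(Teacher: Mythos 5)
Your argument is correct and follows essentially the same route as the paper: reduce to a finitely generated subgroup $H\leq G$, use the hypothesis to get a finite transversal modulo the central subgroup $H\cap F^*$ (the paper equivalently uses a transversal of $F^*$ in $HF^*$), observe that $F[H]$ is then spanned by finitely many elements, and conclude it is a finite-dimensional division subring. Your additional remarks on reducing from $F(G)$ to $F(H)$ and on identifying $F(H)$ with $F[H]$ only make explicit what the paper leaves implicit.
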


\begin{proof}
	For any finite subset $\{x_1,x_2,\dots,x_k\}\subseteq F[G]$, we may write
		$$x_i=f_{i_1}g_{i_1}+f_{i_2}g_{i_2}+\cdots+f_{i_t}g_{i_t},$$ 
	where $f_{i_j}\in F$ and $g_{i_j}\in G$. Let $A=\left\langle g_{i_j}\right\rangle$ the subgroup of $G$ generated by all $g_{i_j}$. By hypothesis, the group $AF^*/F^*$ is finite. Let $\{y_1,y_2,\dots,y_n\}$ be a transversal of $F^*$ in $AF^*$ and let 
		$$R=Fy_1+Fy_2+\cdots+Fy_n.$$
	It is clear that $R$ is a division ring containing the set $\{x_1,x_2,\dots,x_k\}$ and is finite dimensional over $F$.
\end{proof}

\begin{theorem}\label{theorem_main1}
	Let $D$ be a division ring with center $F$, and $G$ a locally solvable subnormal subgroup of $D^*$. If $G^{(i)}$ is algebraic over $F$ for some $i$, then $G\subseteq F$.
\end{theorem}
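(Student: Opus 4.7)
The plan is to induct on $i$. For the inductive step ($i \geq 2$), I would apply the inductive hypothesis to $G' = [G, G]$: this is locally solvable (as a subgroup of $G$) and subnormal in $D^*$ (the derived subgroup of a subnormal subgroup is subnormal), and its $(i-1)$-st derived subgroup equals $G^{(i)}$, algebraic over $F$ by hypothesis. Induction would then give $G' \subseteq F$, so $G$ has derived length at most $2$ and is solvable, whence $G \subseteq F$ by Stuth's theorem \cite[Theorem~2]{stuth}.

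The substance of the proof lies in the base case $i = 1$: $G$ locally solvable, subnormal in $D^*$, with $N := G'$ algebraic over $F$. Here $N$ is itself a locally solvable subnormal subgroup of $D^*$ whose elements are all algebraic over $F$. My plan is to show that $N$ is abelian-by-locally finite, with $N \cap F^*$ (necessarily central in $N$, hence abelian and normal) as the chosen abelian normal subgroup; then Lemma~\ref{lemma_2.4} gives $N \subseteq F$ directly, after which $G$ has derived length at most $2$, hence is solvable and central by Stuth. This reduces the problem to showing that $N/(N \cap F^*)$ is locally finite: for every finitely generated $N_0 \leq N$, I must prove that $N_0/(N_0 \cap F^*)$ is finite. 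Since each element of $N = G'$ is a product of commutators from $G$, I may absorb $N_0$ into $G_0'$ for some finitely generated (hence solvable) subgroup $G_0 \leq G$, making $N_0$ itself finitely generated solvable with all elements algebraic over $F$. I would then induct on the derived length $d$ of $N_0$: in the abelian case $d = 1$, $F[N_0]$ is a commutative $F$-subalgebra of $D$ generated by finitely many algebraic elements, so a finite field extension of $F$; for $d \geq 2$, Lemma~\ref{lemma_2.2} equips $F[N_0]$ with a right Goldie structure and Lemma~\ref{lemma_2.1} realizes its classical ring of right quotients as a crossed product of a smaller Goldie ring (associated to $N_0^{(d-1)}$) by the abelian quotient $N_0/N_0^{(d-1)}$, from which the inductive hypothesis should propagate finiteness to $N_0$.

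The main obstacle is extracting finiteness of $N_0/(N_0 \cap F^*)$ from the mere algebraicity of its elements: a single algebraic element of a noncommutative division ring may have infinite order modulo the center, so the subnormality of $N$ in $D^*$ (together with the locally solvable envelope $G$) must be used non-trivially. Lemmas~\ref{lemma_2.1} and~\ref{lemma_2.2} are precisely tailored to this purpose, supplying the prime Goldie and crossed-product control needed to run the derived-length induction; I expect carrying this out carefully to form the technical heart of the proof.
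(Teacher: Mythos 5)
Your outer induction on $i$ is sound (and in fact the paper does not even need it: the argument there treats $G^{(i)}$ directly), and your overall strategy for the base case --- show $G'$ is abelian-by-locally finite, apply Lemma~\ref{lemma_2.4}, then finish with Stuth --- matches the second half of the paper's proof. The genuine gap is in how you propose to get the abelian-by-locally finite structure. Your derived-length induction already fails at $d=1$: if $N_0$ is abelian with algebraic generators, then $F[N_0]=F(N_0)$ is indeed a finite field extension of $F$, but that says nothing about finiteness of $N_0/(N_0\cap F^*)$ --- a single element algebraic over $F$ can have infinite order modulo $F^*$, and $K^*/F^*$ is typically infinite for a finite extension $K/F$. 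For $d\ge 2$ you misread Lemma~\ref{lemma_2.1}: it does not \emph{produce} a crossed-product decomposition of a quotient ring; it \emph{assumes} that $J[K]$ is a crossed product of $J[B]$ by $K/B$ and concludes only the group factorization $K=HB$. It yields no finiteness of any quotient, and you have no source for the crossed-product hypothesis. (Also, $N_0/N_0^{(d-1)}$ is not abelian when $d\ge 3$.)

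What is actually missing is the first half of the paper's proof. There, after Stuth reduces to $F(G^{(i)})=D$ and Herstein's theorem places the maximal periodic normal subgroup $\tau(G^{(i)})$ in $F$, Wehrfritz's crossed-product criterion \cite[Theorem 1.1(c)]{wehrfritz_91} exhibits $F[G^{(i)}]$ as a crossed product over an abelian characteristic subgroup $A$; Lemma~\ref{lemma_2.1} (fed by the Goldie property from Lemma~\ref{lemma_2.2}) is then used only to show that $G^{(i)}/B$ is \emph{simple} for $B$ a maximal abelian normal subgroup containing $A$. Since a locally solvable simple group is finite, $F[G^{(i)}]$ is finite-dimensional over the subfield $F(B)$, hence a division ring, and only then does \cite[Theorem 1.1]{wehrfritz87} deliver that $G^{(i)}$ is abelian-by-locally finite --- with respect to some abelian normal subgroup, not necessarily $G^{(i)}\cap F^*$ (Lemma~\ref{lemma_2.4} does not require the latter, so you are also making the target needlessly strong). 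None of this machinery is recoverable from your outline, so the technical heart of the proof is not supplied.
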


\begin{proof}
	First, we prove that $F[G^{(i)}]$ is a division ring. Since $F(G^{(i)})$ is normalized by $G$, it follows by Stuth's Theorem (\cite[Theorem 1]{stuth}) that either $F(G^{(i)})\subseteq F$ or $F(G^{(i)})= D$. If the first case occurs, then $F[G^{(i)}]= F$; we are done. Assume that $F(G^{(i)})= D$. Let $T=\tau(G^{(i)})$ be the unique maximal periodic normal subgroup of $G^{(i)}$. By \cite[Theorem 8]{her}, we conclude that $T\subseteq F$. It follows by  \cite[Theorem 1.1(c)]{wehrfritz_91} that $F[G^{(i)}]$ is 
	a crossed product over an abelian characteristic subgroup $A$ of  $G^{(i)}$. Let $B$ be a maximal abelian normal subgroup of $G^{(i)}$ containing $A$. We shall show that $G^{(i)}/B$ is a simple group. For,  let $C$ be a normal subgroup of $G^{(i)}$ properly containing $B$. It is clearly that $C$ is a non-abelian subnormal subgroup of $D^*$, hence $F(C)=D$ by Stuth's Theorem. Moreover, Lemma \ref{lemma_2.2} says that $F[C]$ is an Ore domain whose skew field of fractions coincided with $F(C)=D$. Thus, we may apply Lemma \ref{lemma_2.1} to conclude that  $G^{(i)}=CA=C$; recall that $A\subseteq C$. It follows that $G^{(i)}/B$ is simple, as claimed. Since $G^{(i)}/B$  is locally solvable and simple group, it is finite (see \cite[12.5.2, p. 367]{robinson}). Setting $K=F[B]$, then the algebraicity of $B$ implies that $K=F(B)$ is a subfield of $D$ contained in $F[G^{(i)}]$. Because $G^{(i)}/B$ is finite, we conclude that $[F[G^{(i)}]: K]_r<\infty$. Since $[F[G^{(i)}]$ is a domain which is finite dimensional over the subfield $K$, we conclude that $F[G^{(i)}]$ is a division ring.\\
	
	Next, we claim that $G^{(i)}\subseteq F$. Indeed, by what we have proved, it follows  that $F[G^{(i)}]$ is a division ring, which is clearly normalized  by $G$. By Stuth's Theorem (\cite[Theorem 1]{stuth}), either $F[G^{(i)}]\subseteq F$ or $F[G^{(i)}]=D$. If the first case occurs, then we are done. Now suppose that $F[G^{(i)}]=D$. It follows from \cite[Theorem 1.1]{wehrfritz87} that $G^{(i)}$ is abelian-by-locally finite. Since $G$ is a subnormal subgroup of $D^*$, so is $G^{(i)}$. By Lemma \ref{lemma_2.4}, we have $G^{(i)}\subseteq F$. Therefore, in any case $G^{(i)}\subseteq F$, as claimed. In other words, we have $G$ is solvable, and the result follows from \cite[Theorem 2]{stuth}.
\end{proof}

\begin{lemma}\label{lemma_2.3}
	Let $D$ be a division ring with center $F$, and $G$ a subnormal subgroup of $D^*$. Assume that $M$ is a non-abelian locally solvable maximal subgroup of $G$ such that $M^{(i)}$ is algebraic over $F$ for some $i$. Then $F[M^{(i)}]$ is a division ring.
\end{lemma}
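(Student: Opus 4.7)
The plan is to adapt the first paragraph of the proof of Theorem~\ref{theorem_main1}, substituting the maximality of $M$ in $G$ for the subnormality of $G$ in $D^{*}$ that was used there. The goal is to establish the Stuth-type dichotomy: either $F(M^{(i)})\subseteq F$, in which case $F[M^{(i)}]\subseteq F$ is a field and we are done, or $F(M^{(i)})=D$, in which case the crossed-product argument of Theorem~\ref{theorem_main1} transfers with only minor changes.

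For the dichotomy, I would first consider the normalizer $N=N_{G}(M^{(i)})$. Since $M^{(i)}$ is characteristic in $M$, we have $M\subseteq N\leq G$, and the maximality of $M$ in $G$ forces $N=M$ or $N=G$. If $N=G$, then $M^{(i)}$ is normal in $G$, hence subnormal in $D^{*}$, and Stuth's Theorem~\cite[Theorem~1]{stuth} applied to $F(M^{(i)})$ yields the dichotomy. The delicate case is $N=M$: here I would pass to the normal closure $H$ of $M^{(i)}$ in $G$, which is normal in $G$ and thus subnormal in $D^{*}$. By Stuth's Theorem, either $F(H)\subseteq F$ (whence $M^{(i)}\subseteq F$ and $F[M^{(i)}]\subseteq F$ is already a field) or $F(H)=D$; in the latter subcase, applying the maximality argument to the subgroup $M\cdot(F(M^{(i)})^{*}\cap G)\leq G$ should rule out $F\subsetneq F(M^{(i)})\subsetneq D$ and force $F(M^{(i)})=D$. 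I expect this step to be the main obstacle, since without the full normality of $M^{(i)}$ in $G$ one cannot directly invoke Stuth on $F(M^{(i)})$.

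Once the dichotomy is secured and one has reduced to $F(M^{(i)})=D$ with $M^{(i)}$ normal in $G$, the rest of the proof closely parallels that of Theorem~\ref{theorem_main1}. Herstein's Theorem~\cite[Theorem~8]{her} places $\tau(M^{(i)})$ in $F$; \cite[Theorem~1.1(c)]{wehrfritz_91} exhibits $F[M^{(i)}]$ as a crossed product over an abelian characteristic subgroup $A\leq M^{(i)}$; one then picks a maximal abelian normal subgroup $B\supseteq A$ of $M^{(i)}$ and shows $M^{(i)}/B$ is simple. For any normal $C\triangleleft M^{(i)}$ with $B\subsetneq C$, the subgroup $C$ is non-abelian by the maximality of $B$, and the chain $C\triangleleft M^{(i)}\triangleleft G$ combined with the subnormality of $G$ in $D^{*}$ makes $C$ subnormal in $D^{*}$; Stuth's Theorem then gives $F(C)=D$, Lemma~\ref{lemma_2.2} makes $F[C]$ an Ore domain with skew field $D$, and Lemma~\ref{lemma_2.1} forces $C=M^{(i)}$. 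Local solvability combined with simplicity makes $M^{(i)}/B$ finite by \cite[12.5.2]{robinson}; algebraicity of $B$ makes $K=F[B]$ a subfield of $D$; and finally $F[M^{(i)}]$ is a domain of finite right dimension over the field $K$, hence a division ring.
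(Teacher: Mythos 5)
There is a genuine gap, and it sits exactly where you flagged uncertainty. The paper's key move is to apply the maximality of $M$ not to $N_G(M^{(i)})$ but to $N_G(F(M^{(i)})^*)$, the normalizer of the unit group of the division subring generated by $M^{(i)}$. This is decisive because every element of $F(M^{(i)})^*\cap G$ automatically normalizes $F(M^{(i)})^*$, so in the case $N_G(F(M^{(i)})^*)=M$ one gets $F(M^{(i)})^*\cap G\subseteq M$ for free; hence $M^{(i)}\unlhd F(M^{(i)})^*\cap G$ is subnormal in $F(M^{(i)})^*$, and Theorem~\ref{theorem_main1} applied \emph{inside the division ring} $F(M^{(i)})$ forces $M^{(i)}$ to be abelian, so $F[M^{(i)}]=F(M^{(i)})$ is a field. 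In the other case $G$ normalizes $F(M^{(i)})^*$ and Stuth gives $F(M^{(i)})=D$. Your choice of $N=N_G(M^{(i)})$ does not yield this, and your proposed repair via the normal closure $H$ and the product $M\cdot(F(M^{(i)})^*\cap G)$ is left as a hope rather than an argument. (The product idea is salvageable --- if it equals $M$ you are in the ``subnormal in $F(M^{(i)})^*$'' situation above, and if it equals $G$ then $F(M^{(i)})^*\cap G\unlhd G$ and Stuth applied to $F(F(M^{(i)})^*\cap G)\subseteq F(M^{(i)})$ forces $F(M^{(i)})=D$ or $M^{(i)}\subseteq F$ --- but you did not carry this out.)

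The second half of your plan is worse off. You reduce to ``$F(M^{(i)})=D$ with $M^{(i)}$ normal in $G$,'' but that case is vacuous: if $M^{(i)}$ were subnormal in $D^*$, Theorem~\ref{theorem_main1} (applied to the locally solvable subnormal subgroup $M^{(i)}$, whose derived subgroup is algebraic) would put $M^{(i)}$ in $F$, contradicting $F(M^{(i)})=D$ unless $D=F$. In the branch that actually occurs, $M^{(i)}$ is \emph{not} known to be subnormal in $D^*$, and then two of your steps fail: (1) you cannot invoke Herstein \cite[Theorem 8]{her} to place $T=\tau(M^{(i)})$ in $F$, since $T$ is not a torsion subnormal subgroup of $D^*$; the paper instead runs the normalizer dichotomy on $F(T)^*$ and must treat the case $T\not\subseteq F$ by an entirely separate argument (same minimal polynomial for the conjugates $x^M$ of $x\in T\setminus F$ inside the field $F(T)$, finite index of $C_M(x)$, and the core $H=\cor_M(C_M(x))$), which your plan omits; and (2) your justification that $C\unlhd M^{(i)}$ is subnormal in $D^*$ via the chain $C\unlhd M^{(i)}\unlhd G$ collapses without $M^{(i)}\unlhd G$ --- the paper instead gets $F(C)=D$ from the maximality dichotomy applied to $N_G(F(C)^*)$, using Theorem~\ref{theorem_main1} to rule out the normalizer being $M$ (which would make $C$ abelian). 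So while the crossed-product endgame (Lemmas~\ref{lemma_2.1} and~\ref{lemma_2.2}, finiteness of the simple locally solvable quotient $M^{(i)}/B$, finite dimension over $K=F[B]$) matches the paper, the scaffolding that makes it applicable is missing.
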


\begin{proof}
	Since $F(M^{(i)})$ is normalized by $M$, it follows that $M\subseteq N_G(F(M^{(i)})^*)\subseteq G$. Since $M$ is maximal in $G$, either $M=N_G(F(M^{(i)})^*)$ or $N_G(F(M^{(i)})^*)=G$.  If the first case occurs, then $M^{(i)} \unlhd F(M^{(i)})^*\cap G$ is subnormal in $F(M^{(i)})^*$ contained in $M$. By Theorem \ref{theorem_main1}, we conclude that $M^{(i)}$ is abelian. The algebraicity of $M^{(i)}$ implies that $F[M^{(i)}]=F(M^{(i)}) $ is a field; we are done. If the second case occurs, then  $F(M^{(i)})^*=D$ by Stuth's Theorem. Let $T=\tau(M^{(i)})$ be the unique maximal periodic normal subgroup of $M^{(i)}$. The local solvability of $T$ implies that it is actually a locally finite group (\cite[Lemma 2.2]{khanh-hai}). Since $T$ is a characteristic subgroup of $M^{(i)}$, it is normal in $M$. It follows that $M\subseteq N_G(F(T)^*)\subseteq G$, which yields either $G=N_G(F(T)^*)$ or $M=N_G(F(T)^*)$. The former case implies $F(T)=D$, hence $D$ is a locally finite division ring by Lemma \ref{lemma_2.5}. Since $M$ is locally solvable, it contains no non-cyclic free subgroups, and thus $[D:F]<\infty$ by \cite[Theorem 3.1]{hai-khanh}. If the latter case occurs, then $T\subseteq F(T)^*\cap G$ is subnormal in $F(T)^*$. In view of \cite[Theorem 8]{her}, we conclude that $T$ is contained in the center of $F(T)$, which means $T$ is abelian. There are two possible cases.
	
	\bigskip 
	
	Case 1. $T\not\subseteq F$.
	
	\bigskip
	
	Take $x\in T\backslash F$. Since $x$ is algebraic over $F$, the elements of the set $x^M=\{m^{-1}xm|m\in M\}\subseteq F(T)$ have the same minimal polynomial over $F$; recall that $F(T)$ is a field. This implies that $|x^M|<\infty$, which says that $[M:C_M(x)]<\infty$. If we set $H=\cor_M(C_M(x))$, then $H$ is a normal subgroup of finite index in $M$. The normality of $H$ in $M$ implies that $M\subseteq N_G(F(H)^*) \subseteq G$. Therefore, we have either $N_G(F(H)^*)=G$ or $N_G(F(H)^*)=M$. The first case implies that $F(H)=D$, which means $x\in F$, a contradiction. We may therefore assume that $N_G(F(H))=M$, from which it follows that $ H\subseteq G\cap F(H)^*$ is a subnormal subgroup of $F(H)^*$ contained in $M$. By Theorem \ref{theorem_main1}, we have $H$ is abelian. If we set $K=F(H)$, then the finiteness of $M/H$ implies that $D=F(M)$ is finite dimensional over $K$. This fact yields $[D:F]<\infty$, hence $F[M^{(i)}]=F(M^{(i)})$ is a division ring.
	
	\bigskip 
	
	Case 2. $T\subseteq F$. 
	
	\bigskip
	
	It follows by  \cite[Theorem 1.1(c)]{wehrfritz_91} that $F[M^{(i)}]$ is 
	a crossed product over an abelian characteristic subgroup $A$ of  $M^{(i)}$. Let $B$ be a maximal abelian normal subgroup of $M^{(i)}$ containing $A$. We shall show that $M^{(i)}/B$ is a simple group. For,  let $C$ be a normal subgroup of $M^{(i)}$ properly containing $B$. It is clearly that $C$ is non-abelian. Since $C$ is normal in $M$, we have $M\subseteq N_G(F(C)^*)\subseteq G$, hence either $M= N_G(F(C)^*)$ or $G= N_G(F(C)^*)$. The former case implies that $C$ is abelian, a contradiction. Thus we have $G= N_G(F(C)^*)$, from which it follows that $F(C)=D$. Moreover, Lemma \ref{lemma_2.2} says that $F[C]$ is an Ore domain whose skew field of fractions coincided with $F(C)=D$. Thus, we may apply Lemma \ref{lemma_2.1} to conclude that  $M^{(i)}=CA=C$; recall that $A\subseteq C$. This fact shows that $M^{(i)}/B$ is simple, as claimed. Since $M^{(i)}/B$  is locally solvable and simple group, it is finite. Setting $K=F[B]$, then the algebraicity of $B$ implies that $K=F(B)$ is a subfield of $D$ contained in $F[M^{(i)}]$. Because $M^{(i)}/B$ is finite, we conclude that $[F[M^{(i)}]: K]_r<\infty$. Now, $[F[M^{(i)}]$ is a domain which is finite dimensional over the subfield $K$, hence $F[M^{(i)}]$ is a division ring.
\end{proof}

Recall that for a group $G$, the set of all elements with finite conjugate classes forms a subgroup of $G$. Such subgroup is called the $FC$-center of $G$. 
\begin{theorem}
	Let $D$ be a division ring with center $F$, and $G$ a subnormal subgroup of $D^*$. Assume that $M$ is a non-abelian locally solvable maximal subgroup of $G$ such that $M^{(i)}$ is algebraic over $F$ for some $i$. Then, the following hold:
	\begin{enumerate}[(i)]
		\item There exists a maximal subfield $K$ of $D$ such that $K/F$ is a finite Galois extension with $\mathrm{Gal}(K/F)\cong M/K^*\cap G\cong \mathbb{Z}_p$ for  some prime $p$, and $[D:F]=p^2$. 
		\item The subgroup $K^*\cap G$ is the $FC$-center. Also, $K^*\cap G$ is  the Fitting subgroup of $M$. Furthermore, for any $x\in M\setminus K$, we have $x^p\in F$ and $D=F[M]=\bigoplus_{i=1}^pKx^i$.
	\end{enumerate}
\end{theorem}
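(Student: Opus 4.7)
The plan is to use Lemma~\ref{lemma_2.3} to reduce to the finite-dimensional situation and then extract the cyclic algebra structure. Set $D_0 := F[M^{(i)}]$, which is a division ring by Lemma~\ref{lemma_2.3}. Since $M^{(i)} \unlhd M$, the group $M$ normalizes $D_0^*$, so $M \subseteq N_G(D_0^*) \subseteq G$, and the maximality of $M$ in $G$ forces $N_G(D_0^*) \in \{M,G\}$. If $N_G(D_0^*) = G$, then $D_0^*$ is normal in $G$, hence subnormal in $D^*$, and Stuth's theorem yields either $D_0 \subseteq F$ (so $M^{(i)} \subseteq F$) or $D_0 = D$; in the latter case \cite[Theorem~1.1]{wehrfritz87} makes $M^{(i)}$ abelian-by-locally finite and Lemma~\ref{lemma_2.4} forces $M^{(i)} \subseteq F$. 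If $N_G(D_0^*) = M$, then $M^{(i)}$ is subnormal in $D_0^*$, and Theorem~\ref{theorem_main1} applied inside $D_0$ gives $M^{(i)} \subseteq Z(D_0) \subseteq F$. In every branch $M$ is a non-abelian solvable maximal subgroup of $G$.

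Adapting the argument of \cite[Theorem~4.2]{hai-tu} to maximality inside the subnormal subgroup $G$ (systematically replacing the dichotomy $N_{D^*}(\cdot) \in \{M,D^*\}$ with $N_G(\cdot) \in \{M,G\}$ and invoking Stuth's theorem whenever we must pass from $G$ to $D^*$), we conclude that $[D:F] < \infty$ and that $D$ is a cyclic algebra of prime degree $p$ over $F$, with a maximal subfield $K \subset D$ such that $K/F$ is cyclic Galois of order $p$, $M/(K^* \cap G) \cong \Gal(K/F) \cong \Z_p$, and $[D:F] = p^2$. This is part~(i).

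For part~(ii), any $x \in M \setminus K$ induces on $K$ a generator of $\Gal(K/F) \cong \Z_p$, so $x^p$ centralizes $K$ and hence lies in $Z(D) = F$. The decomposition $D = \bigoplus_{i=0}^{p-1} Kx^i$ and the identity $D = F[M]$ then follow from the cyclic algebra presentation. Elements of $K^* \cap G$ have at most $p$ conjugates in $M$ (permuted by $\Gal(K/F)$), so $K^* \cap G$ lies in the $FC$-center; conversely, for $x \in M \setminus K$ the identity $k^{-1} x k = x \cdot \sigma(k)^{-1} k$ (where $\sigma \in \Gal(K/F)$ is induced by $x$) shows that the $K^*$-conjugacy class of $x$ is infinite because $K^*/F^*$ is infinite. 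So the $FC$-center equals $K^* \cap G$. Finally, $K^* \cap G$ is abelian and normal in $M$, hence contained in the Fitting subgroup; since $M/(K^* \cap G) \cong \Z_p$ is simple and $M$ is non-abelian, the Fitting subgroup equals $K^* \cap G$. The main obstacle will be the careful adaptation of \cite[Theorem~4.2]{hai-tu} to the setting of maximality within $G$ rather than $D^*$; once that is in place, all remaining steps are structural computations in a cyclic algebra of prime degree.
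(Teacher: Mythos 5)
There is a genuine gap in your reduction to ``$M$ solvable''. In the branch $N_G(D_0^*)=G$ with $D_0=F[M^{(i)}]=D$, you invoke Lemma~\ref{lemma_2.4} to conclude $M^{(i)}\subseteq F$. But Lemma~\ref{lemma_2.4} applies only to \emph{subnormal} subgroups of $D^*$, and $M^{(i)}$ is not subnormal there: it is normal in $M$, and $M$ is merely maximal (not normal) in $G$, so no subnormal chain down to $M^{(i)}$ exists. The conclusion is also self-contradictory: if $M^{(i)}\subseteq F$ then $F[M^{(i)}]=F\neq D$. This branch is precisely the hard case, and the paper does not dismiss it; it notes that $F[M]=D$ forces $M$ (hence everything in sight) to be abelian-by-locally finite by Wehrfritz, fixes an abelian normal $A\unlhd M$ with $M/A$ locally finite, and splits on whether $M^{(i)}\cap A\subseteq F$. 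If so, $M^{(i)}/(M^{(i)}\cap A)$ is locally finite, Lemma~\ref{lemma_2.5} makes $D$ locally finite-dimensional over $F$, and the absence of non-cyclic free subgroups in $M$ gives $[D:F]<\infty$ by \cite[Theorem 3.1]{hai-khanh}. If not, one picks $x\in(M^{(i)}\cap A)\setminus F$ and runs the finite-conjugacy-class/$\mathrm{Core}$ argument of Case 1 of Lemma~\ref{lemma_2.3} to get $[D:F]<\infty$. Your proposal has no substitute for either subcase.

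The second half of your argument --- ``adapting the argument of \cite[Theorem 4.2]{hai-tu} to maximality inside $G$'' --- is not carried out, and that adaptation is essentially the content that needs proving. The paper avoids it: having first established $[D:F]=n<\infty$, it uses $D\otimes_F D^{op}\cong \mathrm{M}_n(F)$ to view $M$ as a linear group over the field $F$, so local solvability upgrades to solvability, and then it cites \cite[Theorem 3.2]{khanh-hai} (or \cite{moghaddam}), results already formulated for maximal subgroups of subnormal subgroups, which deliver (i) and (ii) wholesale. Your sketches for part (ii) (the $FC$-center and Fitting subgroup computations, $x^p\in F$, the decomposition $D=\bigoplus Kx^i$) are reasonable structural consequences of (i), but they are downstream of the two unproved steps above, so the proof as proposed does not stand.
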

\begin{proof}
	First, we show that $[D:F]<\infty$. If we set $R=F[M^{(i)}]$, then $M\subseteq N_G(R^*) \subseteq G$. By the maximality of $M$ in $G$, it follows that either $N_G(R^*)=M$ or $N_G(R^*)=G$. We need to consider two possible cases:
	
	\bigskip	
	
	\textit{Case 1:} $N_G(R^*)=M$.
	
	\bigskip
	
	In this case, we see that $R^*\cap G\subseteq M$, from which we conclude that $M^{(i)} \unlhd R^*\cap G$. This implies that $M^{(i)}$ is a subnormal subgroup of $R^*$. Moreover, by Lemma \ref{lemma_2.3}, $R$ is a division ring. According  Lemma \ref{lemma_2.4}, we deduce that $M^{(i)}$ is abelian. Hence $M$ is solvable, and we are done by \cite[Theorem 3.2]{khanh-hai} or \cite{moghaddam}. 

\bigskip	

\textit{Case 2:} $N_G(R^*)=G$. 

\bigskip

	Since $R$ is a division ring normalized by $G$, by Stuth's Theorem, either $F[M^{(i)}]\subseteq F$ or $R=F[M^{(i)}]=F[M]=D$. The first case implies that $M$ is solvable, and hence $[D:F]<\infty$ by \cite[Theorem 3.2]{khanh-hai} or \cite{moghaddam}. Now, suppose the second case occurs, from which it follows by \cite[Theorem 1.1]{wehrfritz87} that $M$ is abelian-by-locally finite. Let $A$ be an abelian normal subgroup of $M$ such that $M/A$ is locally finite. We have the two following subcases:

\bigskip	

\textit{Subcase 2.1:} $ M^{(i)}\cap A\subseteq F$. 

\bigskip

	We know that $M^{(i)}/M^{(i)}\cap A\cong M^{(i)}A/A\leq M/A$ is a locally finite group. By Lemma \ref{lemma_2.5}, it follows that $D=F[M^{(i)}]$ is a locally finite division ring. Since $M$ is locally solvable, it contains no non-cyclic free subgroups. Thus, by  \cite[Theorem 3.1]{hai-khanh}, we have $[D:F]<\infty$.

\bigskip	

\textit{Subcase 2.2:} $ M^{(i)}\cap A\not\subseteq F$. 

\bigskip

	In this case, there is an element $x\in (M^{(i)}\cap A)\backslash F$ which is algebraic over $F$. By the same arguments used in Case 1 of the proof of Lemma \ref{lemma_2.3}, we have  $[D:F]<\infty$. \\

	Setting $n=[D:F]$, we know that $D\otimes_FD^{op}\cong \M_n(F)$. Thus, by viewing $M$ as a subgroup of $\GL_n(F)$, we conclude that $M$ is a solvable group and the results follow from \cite[Theorem 3.2]{khanh-hai} or \cite{moghaddam}.

\end{proof}

\end{document}